\newtheorem{theorem}{Theorem}
\newtheorem{lemma}[theorem]{Lemma}
\newtheorem{remark}[theorem]{Remark}
\newtheorem{proposition}[theorem]{Proposition}
\newcommand{\boldzero}{\mathbf{0}}
\setlist[enumerate]{leftmargin=.5in}
\setlist[itemize]{leftmargin=.5in}
\title{
    A geometric criterion
    on the equality between BKK bound and intersection index
}
\author[1]{Tianran Chen}
\ead{ti@nranchen.org} 
\affiliation[1]{
    organization={Department of Mathematics, Auburn University at Montgomery},
    addressline={Montgomery, AL USA}
}
\DeclareMathOperator{\Newt}{Newt}
\DeclareMathOperator{\Mvol}{MVol}
\DeclareMathOperator{\Init}{init}
\DeclareMathOperator{\Span}{span}
\DeclareMathOperator{\conv}{conv}
\newcommand{\C}{\mathbb{C}}
\newcommand{\R}{\mathbb{R}}
\newcommand{\Z}{\mathbb{Z}}
\newcommand{\bolda}{\mathbf{a}}
\newcommand{\bolde}{\mathbf{e}}
\newcommand{\boldx}{\mathbf{x}}
\newcommand{\boldv}{\mathbf{v}}
\begin{document}


\begin{abstract}
    The Bernshtein-Kushnirenko-Khovanskii theorem provides a generic root count
    for system of Laurent polynomials in terms of the mixed volume of their Newton polytopes
    (i.e., the BKK bound).
    A recent and far-reaching generalization of this theorem is the study of
    birationally invariant intersection index by Kaveh and Khovanskii.
    This short note establishes a simple geometric condition
    on the equality between the BKK bound and the intersection index
    for a system of vector spaces of Laurent polynomials.
    Applying this, we show that the intersection index
    for the algebraic Kuramoto equations equals their BKK bound.
\end{abstract}



\begin{keyword}
  BKK bound \sep
  Bernshtein-Kushnirenko-Khovanskii theorem \sep
  Newton polytope \sep
  mixed volume \sep
  Kuramoto equations
\end{keyword}
\maketitle

\section{Introduction}\label{sec:intro}

The Bernshtein-Kushinirenko-Khovanskii 
theorem~\cite{Bernshtein1975Number,Khovanskii1978Newton,Kushnirenko1976Polyedres,Kushnirenko1975Newton,Kushnirenko1976Newton}
relates the root counting problem for systems of polynomial equations
and the theory of convex bodies.
In particular, it states that the generic (and hence maximum)
number of isolated solutions a system of Laurent polynomial equations has
in the algebraic torus $(\C^*)^n = (\C \setminus \{0\})^n$ equals
the mixed volume of their Newton polytopes.
This is the Bernshtein-Kushnirenko-Khovanskii (BKK) bound.

Recently, a far-reaching generalization of this theorem is developed in a series 
of works~\cite{KavehKhovanskii2008Convex,KavehKhovanskii2012Newton,KavehKhovanskii2010Mixed}
by K. Kaveh and A. Khovanskii where the root counting question is considered
for much more general spaces of rational functions.
Given an irreducible $n$-dimensional complex algebraic variety $X$
and $n$-tuple of finite dimensional vector spaces $(L_1,\dots,L_n)$
of rational functions on $X$,
for generic elements $f_i \in L_i$ for $i=1,\dots,n$,
the number of common solutions a system $f_1 = \dots = f_n = 0$
has in $X$ is closely related to the
geometry of the Newton-Okunkov bodies associated with $L_1,\dots,L_n$.
This generic root count is given the name
\emph{birationally invariant intersection index}
(or simply, intersection index).

In this short note we establish a simple geometric criterion
on the equality between the BKK bound
and the more refined intersection index,
even when the space of functions
are not generated by monomials. 


\section{Preliminaries}\label{sec:prelim}

For $\boldx = (x_1,\dots,x_n)$ and 
$\bolda = \begin{bmatrix} a_1 & \dots & a_n \end{bmatrix}^\top \in \Z^n$,
we use the notation $\boldx^{\bolda} = x_1^{a_1} \cdots x_n^{a_n}$
for the \emph{Laurent monomial}.
For a \emph{Laurent polynomial} 
$f(\boldx) = \sum_{\bolda \in S} c_\bolda \boldx^{\bolda}$,
$\Newt(f) := \conv(S)$ is its \emph{Newton polytope}.
With respect to a vector $\boldv \in \R^n$, 
its \emph{initial form} $\Init_{\boldv}(f)$ is the expression 
$\sum_{\bolda \in (S)_{\boldv}} c_{\bolda} \boldx^{\bolda}$
where $(S)_{\boldv} \subset S$ is the subset on which the linear functional
$\langle \boldv \,,\, \cdot \rangle$ is minimized.
$\C[x_1^\pm,\dots,x_n^\pm]$ denotes the set of all Laurent polynomials in $x_1,\dots,x_n$.

The natural space to study this root counting question is 
the \emph{algebraic torus} $(\C^*)^n = (\C \setminus \{0\})^n$.
While the root count of a Laurent polynomial system can vary greatly depending on the coefficients,
for ``generic'' coefficients the $(\C^*)^n$-root count  
remains a constant and only depends on its monomial structure.
D. Bernshtein showed this constant is precisely the mixed volume\footnote{%
    Here, we follow the convention that
    the mixed volume $\Mvol(P_1,\ldots,P_n)$
    of $n$ convex polytopes $P_1,\ldots,P_n \subset \R^n$
    is the coefficient of the mixed term $\lambda_1 \cdots \lambda_n$ 
    in the homogenous polynomial
    $\operatorname{Vol}(\lambda_1 P_1 + \cdots + \lambda_n P_n)$~\cite{Minkowski1911Theorie},
    where $\operatorname{Vol}$ is the Euclidean volume form.
}
of their Newton polytopes \cite{Bernshtein1975Number}. 
This is also an upper bound on the number of isolated $\C^*$-zeros
such a Laurent polynomial system could have,
and it is known as the
\emph{Bernshtein-Kushnirenko-Khovanskii} (BKK) bound,
after a circle of closely related works by
Bernshtein~\cite{Bernshtein1975Number},
Kushnirenko~\cite{Kushnirenko1976Polyedres,Kushnirenko1975Newton,Kushnirenko1976Newton}, and
Khovanskii~\cite{Khovanskii1978Newton}.
The arguably more important part of Bernshtein's paper
\cite{Bernshtein1975Number} is his second theorem:

\begin{theorem}[Bernshtein 1975~\cite{Bernshtein1975Number}]\label{thm:bernshtein-b}
    Given a Laurent polynomial system $\mathbf{f} = (f_1,\dots,f_n)$ 
    in $\boldx = (x_1,\dots,x_n)$,
    if for all $\boldzero \ne \boldv \in \R^n$,
    the initial system $\Init_{\boldv}(\mathbf{f})$ has no zero in $(\C^*)^n$,
    then all zeros of $\mathbf{f}$ in $(\C^*)^n$ are isolated,
    and the total number, counting multiplicity, is the mixed volume
    $\Mvol(\Newt(f_1),\dots,\Newt(f_n))$.
\end{theorem}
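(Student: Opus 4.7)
The plan is to prove the theorem by extending $\mathbf{f}$ to a system of sections on a toric compactification of $(\C^*)^n$ and converting the root count into a toric intersection number. The three pieces to assemble are (i) the identification of the intersection number with the mixed volume, (ii) the exclusion of boundary zeros using the non-degeneracy hypothesis, and (iii) the length computation that recovers the claimed root count.

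Concretely, I would choose a smooth projective toric variety $X$ whose fan is a common refinement of the normal fans of the Newton polytopes $P_i = \conv(S_i)$. Multiplying each $f_i$ by a suitable Laurent monomial does not change its zero set in the torus but renders it a global section $\tilde f_i$ of a globally generated line bundle $\mathcal{L}_i$ whose associated polytope is a translate of $P_i$. The Kushnirenko formula for toric intersection numbers then yields
\[
\mathcal{L}_1 \cdots \mathcal{L}_n \;=\; n!\,\Mvol(P_1,\dots,P_n),
\]
so the whole theorem reduces to showing that the scheme-theoretic common vanishing locus of $\tilde f_1, \dots, \tilde f_n$ is zero-dimensional and contained in the open torus $(\C^*)^n \subset X$.

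For the boundary exclusion, each torus-invariant prime divisor $D_\boldv$ of $X$ corresponds to a primitive ray generator $\boldv$ of the fan, and on the dense torus orbit of $D_\boldv$ the restriction of $\tilde f_i$ equals, up to an invertible monomial, the initial form $\Init_{\boldv}(f_i)$ viewed as a Laurent polynomial on an $(n-1)$-dimensional torus. The hypothesis that $\Init_{\boldv}(\mathbf{f})$ has no common torus zero for every nonzero $\boldv$ therefore immediately rules out common zeros on every codimension-one orbit. A descending induction on orbit dimension, using that taking a further initial form along a primitive direction inside a lower-dimensional orbit only shrinks the torus zero locus, propagates the non-vanishing to every boundary stratum. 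Combined with the intersection formula, this produces a finite subscheme of length $n!\,\Mvol(P_1,\dots,P_n)$ supported in $(\C^*)^n$, whose length at each point is precisely the local multiplicity of that torus zero of $\mathbf{f}$.

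The hardest step will be the orbit-by-orbit analysis just described: the hypothesis is phrased only in terms of primitive ray directions of $\R^n$, yet the boundary orbits of $X$ of codimension greater than one correspond to higher-dimensional cones of the fan, and one must verify that iterated initial-form operations inherit the non-degeneracy on the smaller tori obtained by successive degenerations. Formalizing this compatibility, and ensuring the chosen refinement of the fan is fine enough for every boundary orbit to be controlled by the given assumption, is the technical heart of the proof; the remaining ingredients are standard consequences of toric intersection theory.
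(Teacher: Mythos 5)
The paper offers no proof of this statement: it is quoted as Bernshtein's 1975 theorem with a citation, so there is nothing internal to compare against. Your sketch is the standard toric-compactification proof (the Khovanskii--Kushnirenko route rather than Bernshtein's original argument via Puiseux-series deformations), and its overall architecture is sound: compactify, identify the intersection number with the mixed volume, exclude boundary zeros via initial systems, and match lengths with local multiplicities. Two corrections are needed, though. First, a normalization error: with the paper's convention that $\Mvol(P_1,\dots,P_n)$ is the coefficient of $\lambda_1\cdots\lambda_n$ in $\operatorname{Vol}(\lambda_1P_1+\cdots+\lambda_nP_n)$, the toric intersection number is $\mathcal{L}_1\cdots\mathcal{L}_n=\Mvol(P_1,\dots,P_n)$, not $n!\,\Mvol(P_1,\dots,P_n)$; your factor of $n!$ belongs to the other (symmetric) normalization of mixed volume, and as written your final length $n!\,\Mvol(P_1,\dots,P_n)$ contradicts the count the theorem asserts.

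Second, the step you single out as the technical heart is in fact not an obstacle, because you have misread the hypothesis: it quantifies over \emph{all} nonzero $\boldv\in\R^n$, not merely over the primitive ray generators of the fan. For any cone $\sigma$ of the fan, choose $\boldv$ in the relative interior of $\sigma$; the restrictions of the $\tilde f_i$ to the orbit $O(\sigma)$ are, up to invertible monomials, determined by the faces of the $P_i$ minimizing $\langle\boldv,\cdot\rangle$, and a common zero on $O(\sigma)$ lifts to a common zero of $\Init_{\boldv}(\mathbf{f})$ in $(\C^*)^n$ (each $\Init_{\boldv}(f_i)$ is quasi-homogeneous for the one-parameter subgroup $t\mapsto t^{\boldv}$, so its torus zero set is invariant under it and surjects onto the orbit). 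The hypothesis therefore kills every boundary stratum in one stroke; no descending induction on iterated initial forms is required, and the assertion that further initial forms ``only shrink the torus zero locus'' is not the right mechanism in any case (the correct identity is $\Init_{\boldv_2}(\Init_{\boldv_1}(f))=\Init_{\boldv_1+\epsilon\boldv_2}(f)$ for small $\epsilon>0$, which again reduces to the hypothesis for a single nonzero direction). With these two repairs the argument goes through.
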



Systems satisfying this condition
are said to be \emph{Bernshtein-general}.
Bernshtein showed Bernshtein-generalness
hold for generic choices of coefficients:

\begin{lemma}[Bernshtein 1975~\cite{Bernshtein1975Number}]\label{lem:bernshtein}
    Given a Laurent polynomial system $\mathbf{f} = (f_1,\dots,f_n)$ 
    in $\boldx = (x_1,\dots,x_n)$,
    there is a nonempty Zariski-open set 
    of the coefficients for which 
    the initial system $\Init_\boldv (\mathbf{f})$ has no solution in $(\C^*)^n$
    for any nonzero vector $\boldv \in \R^n$.
\end{lemma}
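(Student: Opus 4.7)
The plan is to reduce the universal quantifier over $\boldv \in \R^n \setminus \{0\}$ to a finite disjunction and then handle each case using the $\C^*$-quasi-homogeneity of initial forms.

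First I would observe that as $\boldv$ varies over $\R^n \setminus \{0\}$, the tuple of face supports $((S_1)_\boldv, \ldots, (S_n)_\boldv)$ takes only finitely many distinct values, since each $S_i$ is finite. The set of $\boldv$'s realizing a given tuple is a union of relatively open cones in the common refinement of the normal fans of the $\conv(S_i)$'s, and each such rational cone contains an integer vector. It therefore suffices to prove, for each such $\boldv \in \Z^n \setminus \{0\}$, that the locus in the coefficient space $C = \prod_i \C^{S_i}$ on which $\Init_\boldv(\mathbf{f})$ has a zero in $(\C^*)^n$ is a proper Zariski closed subset; a finite union of proper closed subsets is proper closed, so the open complement gives the generic condition sought.

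Fix one such $\boldv$. Every term $c_{i,\bolda}\boldx^\bolda$ of $\Init_\boldv(f_i)$ satisfies $\langle \boldv, \bolda \rangle = m_i$, where $m_i$ is the minimum of $\langle \boldv, \cdot \rangle$ on $S_i$, so the $\C^*$-action defined by $t \cdot \boldx := (t^{v_1} x_1, \ldots, t^{v_n} x_n)$ scales the initial form:
\[
    \Init_\boldv(f_i)(t \cdot \boldx) = t^{m_i}\, \Init_\boldv(f_i)(\boldx).
\]
Hence the common zero set of $\Init_\boldv(\mathbf{f})$ in $(\C^*)^n$ is invariant under this action, and since $\boldv \neq 0$ the action has no fixed points in the torus, so every non-empty component of the zero set has dimension at least one.

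To finish I would mount a standard incidence-variety dimension count. Writing $C_\boldv := \prod_i \C^{(S_i)_\boldv}$ for the coefficients actually appearing in the initial system, form
\[
    \mathcal{Z}_\boldv := \{ (c, \boldx) \in C_\boldv \times (\C^*)^n : \Init_\boldv(f_i)(\boldx) = 0 \text{ for all } i \}.
\]
The projection $\mathcal{Z}_\boldv \to (\C^*)^n$ realizes $\mathcal{Z}_\boldv$ as a vector bundle whose fiber over each $\boldx$ is cut out by $n$ non-trivial linear equations involving disjoint blocks of coefficients, so $\mathcal{Z}_\boldv$ is irreducible of dimension $\dim C_\boldv$. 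By the previous paragraph, every non-empty fiber of the other projection $\mathcal{Z}_\boldv \to C_\boldv$ has dimension at least one, so the fiber-dimension theorem forces the image to lie in a proper closed subvariety of $C_\boldv$, which pulls back to a proper closed subvariety of $C$. I expect the main subtlety to be the reduction step, making sure that the quasi-homogeneity argument (which requires $\boldv \in \Z^n$) actually covers the a priori continuous family of real directions quantified in the lemma; once that reduction is in place, the dimension count is essentially automatic.
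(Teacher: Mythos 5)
The paper offers no proof of this lemma --- it is quoted directly from Bernshtein's 1975 paper --- so there is nothing internal to compare against. Your argument is correct and is essentially the standard proof of Bernshtein's ``Theorem A'' genericity lemma: reduce to finitely many integer directions $\boldv$ via the common refinement of normal fans, use the quasi-homogeneity of $\Init_\boldv(\mathbf{f})$ under the one-parameter subgroup $t\mapsto(t^{v_1},\dots,t^{v_n})$ to force every nonempty zero set in $(\C^*)^n$ to be at least one-dimensional, and conclude by the fiber-dimension count on the incidence variety that the bad locus of coefficients is proper and closed. All the steps, including the reduction from real to integer $\boldv$ and the nontriviality of the linear conditions on each coefficient block (valid since monomials are units on the torus), check out.
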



These results have been generalized into
the theory of
\emph{birationally invariant intersection index}
\cite{KavehKhovanskii2012Newton,KavehKhovanskii2010Mixed}.
Instead of considering
generic linear combinations of Laurent monomials,
one could start with $\C$-vector spaces
$L_1,\dots,L_n$ each spanned by
finitely many rational functions 
on an irreducible toric variety $X$.
Then for generic choices of functions $f_1 \in L_1, \dots, f_n \in L_n$,
the number of common zeros $(f_1,\dots,f_n)$ has
(away from base locus of the system)
in $X$ is a constant that is independent of the choices.
This is the \emph{intersection index} of $L_1,\dots,L_n$
and is denoted by $[L_1,\dots,L_n]$.
This grand theory relates the root counting problem to the geometric properties
of \emph{Newton-Okunkov bodies},
and the BKK bound is thus a special case of this intersection index
in the situations where each $L_i$ is spanned by Laurent monomials.
In the following, we extend the BKK bound to certain cases where
each $L_i$ is spanned by Laurent polynomials.

\section{The main theorem}\label{sec:main}

The goal here is to show the equality of the
intersection index and the BKK bound
under a simple geometric condition
and thereby generalize the theory of BKK bound.
We focus on the cases where $X = (\C^*)^n$ and
$L_1,\dots,L_n$ are spanned by finitely many
Laurent polynomials. 
That is,
\begin{equation}
    L_i = \Span_{\C} \{ P_{ij} \}_{j = 1}^{m_i}
\end{equation}
where each $P_{ij} \in \C[x_1^{\pm},\dots,x_n^{\pm}]$ is nonzero
and $m_i \in \Z^+$.
This setup is a generalization of the situation in~\Cref{thm:bernshtein-b}
where each $L_i$ is only spanned by a set of Laurent monomials.
That is, if each $P_{ij}$ is a Laurent monomial,
then $[\, L_1, \dots, L_n \,]$ is exactly the BKK bound.
The main result of this note is a geometric condition
under which the BKK bound remains sharp 
even when each $P_{ij}$ is a Laurent polynomial.

A generic element $f_i \in L_i$, is a Laurent polynomial
$f_i = \sum_{j=1}^{m_i} c_{ij} P_{ij}$ with generic choice of the coefficients
$c_{i1},\dots,c_{im_i}$.
It is easy to see that among the terms within such a generic element,
there are no cancellations and consequently
$\Newt(f_i) = \conv \left( \cup_{j=1}^{m_i} \Newt(P_{ij}) \right)$.
It is therefore reasonable to use the notation
\[
    \Newt(L_i) := \conv \left( \cup_{j=1}^{m_i} \Newt(P_{ij}) \right).
\]
By Bernshtein's Theorem \cite{Bernshtein1975Number},
\[
    [\, L_1,\ldots, L_n \,] \le 
    \Mvol(\Newt(L_1),\ldots,\Newt(L_n)).
\]
The equality does not hold in general.
The main result of this note is a sufficient condition
on the equality between the two root counts
stated in terms of the ``exposure'' of each Newton polytope
$\Newt(P_{ij})$ on the boundary of $\Newt(L_i)$.
This condition is, intentionally, chosen to only rely on
the geometric information that can be obtained from
the individual Newton polytopes $\Newt(L_1),\ldots,\Newt(L_n)$
and does not require information from
any of their mixed subdivisions.
In other words,
the condition to be established here
is purely polytopal, not tropical.

\begin{theorem}\label{thm:main}
    Let $L_1,\dots,L_n$ be vector spaces of rational functions
    with $L_i = \Span_{\C} \{ P_{ij} \}_{j = 1}^{m_i}$
    where each $P_{ij} \in \C[x_1^{\pm},\dots,x_n^{\pm}]$ and $m_i \in \Z^+$
    as described above.
    If for each $i=1,\dots,n$, we have
    \begin{enumerate}
        \item $\dim(\Newt(L_i)) = n$,
        \item functions in $L_i$ have no common zeros in $(\C^*)^n$,
        \item every positive-dimensional proper faces of $\Newt(L_i)$
            intersect $\Newt(P_{ij})$ at no more than one point for each $j=1,\dots,m_i$,
    \end{enumerate}
    then
    \begin{equation}
        [\, L_1,\dots,L_n \,] =
        \Mvol\,(\, \Newt(L_1), \dots, \Newt(L_n) \,).
    \end{equation}
\end{theorem}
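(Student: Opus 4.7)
The plan is to show that for Zariski-generic coefficients $(c_{ij})$ the system $\mathbf{f} = (f_1,\dots,f_n)$ with $f_i = \sum_j c_{ij} P_{ij}$ satisfies the hypothesis of \cref{thm:bernshtein-b}; then its $(\C^*)^n$-root count (counting multiplicity) equals $\Mvol(\Newt(f_1),\dots,\Newt(f_n))$. Since generic linear combinations produce no cancellation we have $\Newt(f_i) = \Newt(L_i)$, and by the very definition of the intersection index this generic root count is $[L_1,\dots,L_n]$, giving the asserted identity. The task therefore reduces to ruling out $(\C^*)^n$-zeros of the initial system $\Init_\boldv(\mathbf{f})$ for every nonzero $\boldv \in \R^n$.

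Fix such a $\boldv$ and set $F_i := (\Newt(L_i))_\boldv$, a proper face of $\Newt(L_i)$ by full-dimensionality. The face condition (trivial when $F_i$ is a vertex, and the stated hypothesis otherwise) forces $|F_i\cap\Newt(P_{ij})|\le 1$ for every $j$, so any $\Init_\boldv(P_{ij})$ that contributes to $\Init_\boldv(f_i)$ is a single monomial. Two structural observations follow. First, $\operatorname{supp}(\Init_\boldv(f_i))$ is contained in the affine hyperplane through $F_i$ with normal $\boldv$. Second, each $c_{ij}$ enters the coefficient of at most one monomial of $\Init_\boldv(f_i)$, so the linear map from $(c_{ij})_j$ to the coefficient tuple of $\Init_\boldv(f_i)$ is surjective; combined with the independence of $\{c_{ij}\}_j$ across distinct $i$, this shows that a generic $(c_{ij})$ produces generic coefficients for the entire initial system $\Init_\boldv(\mathbf{f})$.

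The hyperplane observation allows a unimodular lattice coordinate change sending $\boldv$ to an axis direction, so that $\Init_\boldv(\mathbf{f})$, after factoring out monomial units, becomes $n$ Laurent polynomials in only $n-1$ of the new variables. The genericity observation then tells us these polynomials have generic coefficients, and a generic overdetermined Laurent system in $n-1$ variables has no $(\C^*)^{n-1}$-zero; for instance, one may apply \cref{thm:bernshtein-b} to any $n-1$ of the equations to obtain a finite zero set on which the remaining generic equation does not vanish (equivalently, an incidence-variety dimension count). Because $\Init_\boldv(\mathbf{f})$ depends only on the cone of the common refinement of the normal fans containing $\boldv$, only finitely many initial systems need to be ruled out, and the intersection of finitely many Zariski-dense conditions is Zariski-dense; \cref{thm:bernshtein-b} then yields the conclusion. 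The crux of the argument, and the raison d'être of the at-most-one-point hypothesis, is the second structural observation: without it, several monomials of a single $P_{ij}$ could land on $F_i$, coupling coefficients of $\Init_\boldv(f_i)$ through a shared $c_{ij}$ and confining the initial-coefficient tuple to a proper subvariety where the overdetermined initial system can genuinely retain $(\C^*)^n$-zeros and Bernshtein's criterion ceases to apply.
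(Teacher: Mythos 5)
Your proof is correct and follows essentially the same route as the paper's: reduce to verifying Bernshtein's non-degeneracy condition for generic $(c_{ij})$, and use the at-most-one-point hypothesis to show each $\Init_{\boldv}(f_i)$ is a sum of monomials supported on the face $F_i$ with independent generic coefficients. The only difference is presentational: where you re-derive the non-vanishing of the initial system by a unimodular change of coordinates reducing it to an overdetermined generic system in $n-1$ variables, the paper obtains the same conclusion by directly citing \cref{lem:bernshtein}.
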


\begin{proof}
    Let $f_1,\dots,f_n$ be generic elements in $L_1,\dots,L_n$ respectively,
    i.e., $f_i = \sum_{j=1}^{m_i} c_{ij} P_{ij}$
    for generic coefficients $\{ c_{ij} \}$.
    Since it is also assumed that Laurent polynomials in each $L_i$
    have no common roots in $(\C^*)^n$,
    the common root count of the system
    $\mathbf{f} = (f_1,\dots,f_n)$ in $(\C^*)^n$
    equals the intersection index $[\, L_1,\dots,L_n \,]$.
    It is therefore sufficient to show the root count of $\mathbf{f}$ in $(\C^*)^n$
    matches the BKK bound, i.e., 
    $\mathbf{f}$ satisfies the conditions in~\Cref{thm:bernshtein-b}.
    
    Let $\boldv \in \R^n$ be a nonzero vector such that $\Init_{\boldv}(\mathbf{f})$
    does not contain a unit
    (i.e., no component of $\mathbf{f}$ is a single Laurent monomial term).
    Since $\Newt(f_i) = \Newt(L_i)$ is assumed to be full-dimensional for $i=1,\dots,n$,
    $\boldv$ must be a common inner normal vector for $n$ proper positive dimensional 
    faces $F_1,\dots,F_n$ of $\Newt(f_1),\dots,\Newt(f_n)$ respectively.
    
    For each $i=1,\dots,n$, let $A_{ij} = F_i \cap \Newt(P_{ij})$, 
    then, by assumption, each $A_{ij}$ contains at most one point.
    Without loss of generality, after re-indexing $P_{ij}$'s,
    we can assume that for a fixed $i$, 
    $A_{ij} = \{ \bolda_{ij} \}$ for $j = 1,\dots,m_i'$
    and $A_{ij} = \varnothing$ for $j=m_i'+1,\dots,m_i$
    where $m_i' \in \Z^+$ and $m_i' \le m_i$
    (since $F_i \cap \Newt(P_{ij})$ may be empty for some $j$).
    With this definition,
    $\{ \bolda_{i,1}, \dots, \bolda_{i,m_i'} \} = \bigcup_{j=1}^{m_i} A_{ij}$,
    and consequently,
    \[
        \Init_{\boldv} (f_i) \in 
        \Span_{\C} \{ 
            \boldx^{\bolda_{i1}},
            \dots,
            \boldx^{\bolda_{im_i'}}
        \}.
    \]
    Moreover, the set of coefficients is a subset of the
    coefficients in $f_i$. Indeed,
    \[
        \Init_{\boldv}  (f_i) =
        \sum_{j=1}^{m_i'} c_{ij} \boldx^{\bolda_{i,j}}
    \]
    with the exponent vectors $\bolda_{i1},\dots,\bolda_{im_i'}$
    all lie in a proper face of $\Newt(L_i)$
    and the coefficients $c_{ij}$'s being independent from one another.
    By \cref{lem:bernshtein}, there exists a nonempty Zariski open set
    in the coefficient space $\{ c_{ij} \}$ for which the initial system
    $\Init_{\boldv}(\mathbf{f})$ has no solution in $(\C^*)^n$.
    
    Note that there are only finitely many distinct initial systems for $\mathbf{f}$.
    By taking the intersection of a finite number of nonempty Zariski open set,
    we can see that there remains a nonempty Zariski open set in the
    coefficient space $\{ c_{ij} \}$ such that for all choices in this set,
    $\Init_{\boldv}(\mathbf{f})$ either contains a unit
    or has no solution in $(\C^*)^n$ for any nonzero vector $\boldv \in \R^n$.
    
    By \Cref{thm:bernshtein-b}, for generic choices of the coefficients
    $\{ c_{ij} \}$, the BKK bound for $\mathbf{f}$ is exact,
    i.e., the common root count in $(\C^*)^n$ for this system 
    is exactly $\Mvol(\Newt(f_1),\dots,\Newt(f_n))$.
    Recall that each $f_i$ is a generic member of $L_i$.
    This shows
    \[
        [\, L_1,\dots,L_n \,] = \Mvol(\,\Newt(L_1),\dots,\Newt(L_n)\,).
        \qedhere
    \]
\end{proof}

\begin{remark}
    Condition 2 of \Cref{thm:main}
    is needed because the root count
    of a system in $(L_1,\ldots,L_n)$
    may include points at which all functions in $L_i$ vanish
    for some $i$,
    while these points are excluded
    from the intersection index
    $[\, L_1, \ldots, L_n]$.

    Neither condition 1 nor 3 are necessary.
    In this note, we focus on the problem of detecting
    the equality between the BKK bound and the intersection index
    using geometric information of the individual Newton polytopes
    in $\Newt(L_1),\ldots,\Newt(L_n)$,
    rather than the more refined tropical information.
    In \Cref{rmk: kuramoto root count},
    we highlight some more general results.
\end{remark}

\section{Generic root count of Kuramoto equations}\label{sec:kuramoto}

The Kuramoto model~\cite{Kuramoto1975Self} is a ubiquitous model for 
studying the phenomenon of spontaneous synchronization of 
networks of coupled oscillators.
It has long been known that
when simple harmonic oscillators are coupled with one another,
complicated collective behaviors emerge.
Biological examples include
pacemaker cells in the heart and
the formation of circadian rhythm in the brain.
Such models have since found important applications in
many other seemingly independent research fields.

An oscillator can be modeled as a moving point on
the complex plane circling 0.
A swarm of such points interacting with one another
thus form a network of coupled oscillators.
For weakly coupled and nearly identical oscillators,
Winfree intuited that there is
natural separation of timescales:
On the short timescale,
oscillators are approximated by their limit cycles
and thus can be represented by their phases \cite{Winfree1967Biological}.
This is derived rigorously by Kuramoto
\cite{Kuramoto1975Self} using perturbation methods.
Kuramoto singled out the simplest case with the governing equations
\begin{equation}\label{equ: generalized kuramoto}
  \dot{\theta}_i =
  \omega_i -
  \sum_{j=0}^n k_{ij} \sin(\theta_i - \theta_j)
  \quad\text{for } i = 0,\ldots,n,
\end{equation}
in which
$\theta_0,\ldots,\theta_n \in [0,2\pi)$
are the phases of the oscillators,
$\omega_i$'s are their natural frequencies
(i.e., their limit cycle frequencies),
where $k_{ij} = k_{ij}$ are constant coupling coefficients.
This model has since been called the Kuramoto model.

In the study of this model,
one important problem is the classification of
``frequency synchronization configurations'',
which correspond to the equilibria of \eqref{equ: generalized kuramoto}.
Although the equilibrium equations are not algebraic,
through a change of variables
and a relaxation to include complex configurations,
we can consider the algebraic synchronization equation
\cite{Baillieul1982},
given by
\begin{equation}\label{equ:kuramoto-rat}
    0 =
    f_i(\boldx) = \omega_i - \sum_{j=0}^n a_{ij}
    \left(
        \frac{x_i}{x_j} - \frac{x_j}{x_i}
    \right)
    \quad\text{for } i = 1,\dots,n
\end{equation}
where $a_{ij} = \frac{k_{ij}}{2}$ are complex constants,
$x_0 = 1$,
and $x_i = e^{\mathfrak{i} \theta_i}$.
The complex zero set of $(f_1,\ldots,f_n)$
captures the equilibria of \eqref{equ: generalized kuramoto}
which correspond to the synchronization configurations.
The problem of counting such configurations
thus relaxes to a root counting problem.
Since each $f_i$ in \eqref{equ:kuramoto-rat}
is a linear combination of $1$ and Laurent polynomials
$x_i x_j^{-1} - x_j x_i^{-1}$,
it is natural to view the generic root count for this system
as the intersection index $[\, L_1,\ldots,L_n \,]$
where
\begin{equation}\label{equ: node space}
    L_i = \Span_{\C} \{ 1 \} \cup \{ x_i x_j^{-1} - x_j x_i^{-1} \}_{j=0}^n
    \quad\text{for } i = 1,\ldots,n.
\end{equation}
Through a ``modified B\'ezout technique'',
Baillieul and Byrnes first computed this intersection index
(\cite[Theorem 4.1]{Baillieul1982}).
Their proof employed some rather deep results
in modern algebraic geometry.
In the following, we demonstrate the potential usefulness
of \Cref{thm:main} by providing a simple alternative proof.

\begin{proposition}
    Let $L_i$ be the vector spaces of Laurent polynomials
    as defined in \eqref{equ: node space}.
    Then $[\, L_1,\ldots,L_n\,]$ equal to
    the BKK bound of \eqref{equ:kuramoto-rat}.
\end{proposition}

\begin{proof}
    For each $i \in \{1,\dots,n\}$ and $j \in \{0,\dots,n\}$ we define
    \[
        P_{ij} = \frac{x_i}{x_j} - \frac{x_j}{x_i}.
    \]
    Then each $P_{ij}$ is a Laurent polynomial in the variables
    $\boldx = (x_1,\dots,x_n)$, and 
    $\Newt(P_{ij}) = \conv \{ \bolde_i - \bolde_j, \bolde_j - \bolde_i \}$
    where $\bolde_0 = \mathbf{0}$.
    So for each pair $(i,j)$, $\Newt(P_{ij})$ is a line segment through the origin.
    
    For each $i=1,\ldots,n$,
    we consider the vector space of rational functions
    \[
        L_i = \Span_{\C} \; \{ 1 \} \cup \{ P_{ij} \}_{j = 0}^n.
    \]
    Then functions in $L_i$ have no common zeros in $(\C^*)^n$
    It is easy to verify that $f_i \in L_i$ for each $i=1,\dots,n$.
    Therefore, the statement to be proved is equivalent to the claim that
    $[ L_1, \dots, L_n ]$ equals the BKK bound of the system $(f_1,\dots,f_n)$.
    
    By definition,
    \[
        \Newt(L_i) = \conv ( 
            \mathbf{0} \;\cup\; \{ \bolde_i - \bolde_j \,,\, \bolde_j - \bolde_i \}_{j=0}^n 
        ),
    \]
    and, in it, $\mathbf{0}$ is an interior point.
    For $n > 1$, $\Newt(L_i)$ is the convex hull of $n$ affinely independent
    line segments through the origin, and thus $\dim(\Newt(L_i)) = n$ for every $i$.
    Moreover, fixing $i$, for each $j=0,\dots,n$ and $j \ne i$, $\Newt(P_{ij})$
    is a line segment passing though an interior point, the origin, of $\Newt(L_i)$.
    Therefore, for each proper positive dimensional face $F$ of $\Newt(L_i)$,
    $F \cap \Newt(P_{ij})$ is either empty or a single point.
    By~\cref{thm:main}, the generic root count in $(\C^*)^n$,
    i.e., $[ L_1,\dots,L_n ]$ is exactly the BKK bound.
\end{proof}

\begin{remark}\label{rmk: kuramoto root count}
    Though Kuramoto originally only considered complete networks,
    in which every oscillator is influenced by every other oscillator,
    recent research activities have shifted toward sparse networks.
    Sparsity corresponds to the requirement that certain coefficients
    in \eqref{equ:kuramoto-rat} are zero.
    The generalizations of this result to sparse networks
    are developed in Refs.~\cite{ChenDavisMehta2018Counting,ChenKorchevskaiaLindberg2022Typical}
    using other tools.
\end{remark}

\section{Conclusions}\label{sec:conclusions}

In this short note, we
established a sufficient geometric condition under which 
the intersection index of a system of vector spaces of Laurent polynomials
equals its BKK bound.
This condition is stated purely in terms of the geometric information in 
the Newton polytopes of the polynomials involved
and can be checked easily using simple algorithms from convex geometry
without the information from corresponding tropical intersection.
It shows that certain algebraic relations among the coefficients 
have no effect on the exactness of the BKK bound.
The usefulness of this result is demonstrated through an application to
the algebraic Kuramoto equations --- 
a well studied family of equations used to model spontaneous synchronization phenomenon
in many fields. 
With this theorem, we easily established the equality between
the BKK bound and the more refined intersection index
of the algebraic Kuramoto equations,
even though this system has inherent algebraic relations among the coefficients.

\section*{Acknowledgments}

This note started as an attempt to answer
a question raised by Yizhong Li in 2018
during author's visit.
The author thank his hospitality
and comments.
The author's research is supported, in part,
by the National Science Foundation
under award no. DMS-1923099,
Auburn University at Montgomery Grant-in-aid program,
and Auburn University at Montgomery professional improvement leave award.




\bibliographystyle{abbrv}
\bibliography{nobody,bkk,chen,kuramoto}

\end{document}